\newtheorem{thm}{Theorem}[section]
\newtheorem{lem}[thm]{Lemma}
\theoremstyle{definition}
\newtheorem{fact}[thm]{Fact}
\theoremstyle{remark}
\newtheorem{rem}[thm]{Remark}
\let\c@equation\c@thm
\numberwithin{equation}{section}
\def\Ind{\setbox0=\hbox{$x$}\kern\wd0\hbox to 0pt{\hss$\mid$\hss} \lower.9\ht0\hbox to 0pt{\hss$\smile$\hss}\kern\wd0} 
\def\Notind{\setbox0=\hbox{$x$}\kern\wd0\hbox to 0pt{\mathchardef \nn=12854\hss$\nn$\kern1.4\wd0\hss}\hbox to 0pt{\hss$\mid$\hss}\lower.9\ht0 \hbox to 0pt{\hss$\smile$\hss}\kern\wd0}
\title{A note on NSOP$_{1}$ in one variable}
\author{Nicholas Ramsey}
\date{\today}
\begin{document}

\begin{abstract}
We prove that, in order to establish that a theory is NSOP$_{1}$, it suffices to show that no formula in a single free variable has SOP$_{1}$.
\end{abstract}

\maketitle

\section{Introduction}

This note is concerned with showing strong order property 1 (SOP$_1$) is witnessed by a formula with only one free variable.  SOP$_{1}$ was introduced by Mirna D\v{z}amonja and Saharon Shelah in their study of the $\unlhd^{*}$-order and they observed the class of NSOP$_{1}$ contains the simple theories \cite{dvzamonja2004maximality}.  Our subsequent work with Artem Chernikov characterized NSOP$_{1}$ in terms of independent amalgamation of types, which gave a Kim-Pillay-style criterion for NSOP$_{1}$ that, in turn, implied that many non-simple examples of interest lie within this class \cite{ArtemNick}.  Later with Itay Kaplan, we introduced the theory of Kim-independence which provided evidence that NSOP$_{1}$ is a meaningful dividing line, admitting a structure theory close to simplicity theory \cite{kaplan2017kim}.  

SOP$_{1}$ is distinctive among dividing lines because of the difficulty of showing that a theory is NSOP$_{1}$ directly by syntactic means. In essentially all known examples of non-simple NSOP$_{1}$ theories, one first shows that the theory has a well-behaved notion of independence and then makes use of the Kim-Pillay-style criterion from \cite[Corollary 4.1]{ArtemNick} to show that this implies the theory is NSOP$_{1}$. In algebraic examples, such as Frobenius fields or bilinear forms over an algebraically closed field, this strategy is natural and closely parallels the established strategy for showing the simplicity of similar theories, such as bounded PAC fields or ACFA. However, in combinatorial examples, this approach can seem rather cumbersome or indirect.  

We simplify the syntax of SOP$_{1}$ by proving that SOP$_{1}$ is always witnessed by a formula in a single free variable.  One-variable theorems have been proved for almost all of the major dividing lines, both because it makes it easier to check whether a theory has the given property, and because it is a natural test question for one's understanding of the dividing line's behavior.   Yet these theorems can sometimes be difficult to discover.  For example, the questions of whether there are one-variable theorems for the strict order property or the tree property of the second kind were both posed as open problems by Shelah; the former was settled later by Lachlan \cite{lachlan1975remark}, the latter much later by Chernikov \cite{ChernikovNTP2}.  In some cases, the analysis can be simplified by considering generalized indiscernibles, e.g. indiscernible arrays for TP$_{2}$ or indiscernible trees for TP$_{1}$/SOP$_{2}$, but these are of little direct use in studying formulas witnessing SOP$_{1}$ (see \cite[p. 29n1]{Harr} for a discussion).  The argument below instead makes use of an equivalent formulation of SOP$_{1}$ in terms of a sequence of pairs to conclude by a direct combinatorial argument.   

%
%
%

\section{The proof}

We begin by noting some equivalent formulations of SOP$_{1}$ in terms of arrays that will be useful.  In referring to an array $(c_{i,j})_{i < \omega, j < 2}$, we write $\overline{c}_{i} = (c_{i,0},c_{i,1})$ and $\overline{c}_{<i} = (\overline{c}_{k})_{k < i}$.  We write $L(C)$ to denote the collection of $L$-formulas with parameters from the set $C$.  We always assume $T$ is a complete theory with monster model $\mathbb{M} \models T$.  

\begin{fact} \cite[Lemma 2.2, Proposition 2.4]{kaplan2017kim} \label{arrayequivalent}
The following are equivalent:
\begin{enumerate}
\item $T$ has SOP$_{1}$\textemdash that is, there is a formula $\varphi(x;y)$ and a tree of tuples $(a_{\eta})_{\eta \in 2^{<\omega}}$ so that 
\begin{enumerate}
\item For all $\eta \in 2^{\omega}$, $\{\varphi(x;a_{\eta | \alpha}) : \alpha < \omega\}$ is consistent.
\item For all $\nu \frown \langle 0 \rangle \unlhd \eta \in 2^{<\omega}$, $\{\varphi(x;a_{\eta}), \varphi(x;a_{\nu \frown \langle 1 \rangle}) \}$ is inconsistent.  
\end{enumerate}
\item There is a formula $\varphi(x;y)$, possibly with parameters from $C$, and array $(c_{i,j})_{i < \omega, j < 2}$ so that 
\begin{enumerate}
\item $c_{i,0} \equiv_{C\overline{c}_{<i}} c_{i,1}$ for all $i < \omega$.
\item $\varphi(x;c_{i,0}) : i < \omega\}$ is consistent.
\item $\{\varphi(x;c_{i,1}) :i < \omega\}$ is $2$-inconsistent.
\end{enumerate}
\item  There is an array \((c_{i,j})_{i < \omega, j < 2}\), with $c_{i,j} = (d_{ij},e_{ij})$ for all $i,j$, and formulas \(\chi_{1}(x;y)\) and $\chi_{2}(x;z)$, possibly with parameters from $C$, so that, writing $\psi(x;y,z)$ for $\chi_{1}(x;y) \wedge \chi_{2}(x;z)$, the following conditions are satisfied:  
\begin{enumerate}
\item For all \(i < \omega\), \(e_{i,0} \equiv_{C c_{<i,0}e_{<i,1}} e_{i,1}\);
\item \(\{\psi(x;c_{i,0}) : i < \omega\}\) is consistent; 
\item If \(j \leq i$ then $\{\chi_{1}(x;d_{i,0}), \chi_{2}(x;e_{j,1})\}\) is inconsistent.
\end{enumerate}
\end{enumerate}
\end{fact}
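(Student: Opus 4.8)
The equivalence (1) $\Leftrightarrow$ (2) is exactly the content of the cited results, so the plan is to show that (3) is interdeducible with them; I would do this by proving (1) $\Rightarrow$ (3) and (3) $\Rightarrow$ (1), which together with the cited equivalence closes the loop. Before starting I would, as usual, replace the tree $(a_{\eta})_{\eta \in 2^{<\omega}}$ witnessing (1) by a strongly tree-indiscernible copy via the tree modeling property, so that automorphisms of the index tree $2^{<\omega}$ act by elementary maps; this costs nothing and is the same normalization underlying the cited array reformulation.

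For (1) $\Rightarrow$ (3) I expect to get away with the trivial split $\chi_{1} = \chi_{2} = \varphi$ and $d_{ij} = e_{ij} = c_{ij}$. Concretely, I would read the data off the left spine and the right children, setting $c_{i,0} := a_{0^{i+1}}$ and $c_{i,1} := a_{0^{i} \frown \langle 1 \rangle}$. Then (b) is immediate, since $\{\varphi(x; a_{0^{i+1}}) : i < \omega\}$ lies along the branch $0^{\omega}$ and is consistent by (1a). For (c), given $j \leq i$ I would apply (1b) with $\nu = 0^{j}$, so that $\nu \frown \langle 1 \rangle = 0^{j} \frown \langle 1 \rangle$ and $\eta = 0^{i+1} \unrhd 0^{j+1} = \nu \frown \langle 0 \rangle$; this yields the inconsistency of $\{\varphi(x; d_{i,0}), \varphi(x; e_{j,1})\}$, and it even covers the diagonal $j = i$, where $\eta = \nu \frown \langle 0 \rangle$. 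Finally, (a) is the one place the indiscernibility is used: the automorphism of $2^{<\omega}$ swapping the two subtrees below $0^{i}$ fixes every ancestor $0^{k+1}$ and every node $0^{k} \frown \langle 1 \rangle$ for $k < i$, hence fixes $C c_{<i,0} e_{<i,1}$, while sending $0^{i+1} = 0^{i} \frown \langle 0 \rangle$ to $0^{i} \frown \langle 1 \rangle$; thus $e_{i,0} \equiv_{C c_{<i,0} e_{<i,1}} e_{i,1}$.

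The substance is the converse (3) $\Rightarrow$ (1), and here the main obstacle is the asymmetry built into (3a): the base $C c_{<i,0} e_{<i,1}$ controls only the $\chi_{2}$-parameters $e_{i,0}, e_{i,1}$ and conspicuously omits the current $\chi_{1}$-parameter $d_{i,0}$, so one cannot simply copy the array into an SOP$_1$ tree. The plan is to build the tree with the formula $\psi$, putting the consistent column $0$ along the left spine, $a_{0^{n}} := c_{n,0}$, and defining the right child so that a single realigning automorphism returns the whole branch to column $0$. Picking $\sigma_{n} \in \mathrm{Aut}(\mathbb{M} / C c_{<n,0} e_{<n,1})$ with $\sigma_{n}(e_{n,1}) = e_{n,0}$ from (3a), I would give the right child of $0^{n}$ the $\chi_{2}$-datum $e_{n+1,1}$ together with the twisted $\chi_{1}$-datum $\sigma_{n+1}^{-1}(d_{n+1,0})$; the twist is exactly what absorbs the missing control over $d_{n+1,0}$, since applying $\sigma_{n+1}$ carries this node to $c_{n+1,0}$ while fixing all ancestor data, so branch-consistency reduces to (3b). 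The cross-inconsistency demanded by (1b) then comes from (3c): a spine node $a_{0^{m}} = c_{m,0}$ below $\nu \frown \langle 0 \rangle$ contributes $\chi_{1}(x; d_{m,0})$ with $m \geq n+1$, the right child $a_{0^{n} \frown \langle 1 \rangle}$ contributes $\chi_{2}(x; e_{n+1,1})$, and $n+1 \leq m$ triggers (3c).

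The remaining work — and the step I would budget the most care for — is globalizing this spine analysis to arbitrary nodes $\nu \frown \langle 0 \rangle \unlhd \eta$ lying off the left spine. I would handle it by performing the construction coherently in every subtree along the chain $(\sigma_{n})$ and then appealing to compactness together with the tree modeling property to extract a genuinely tree-indiscernible $(a_{\eta})_{\eta \in 2^{<\omega}}$ satisfying (1a)--(1b). This is the standard array-to-tree mechanism underlying the cited equivalence, and I expect the only real novelty to be the bookkeeping forced by the one-sidedness of (3a).
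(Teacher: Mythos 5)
Your proposal is correct in substance but routes the equivalence quite differently from the paper, and more expensively. The paper proves only (2)$\Rightarrow$(3), by a purely formal doubling trick: given $\varphi$ and the array of (2), put $c'_{i,j}=(c_{i,j},c_{i,j})$ and $\chi_{1}=\chi_{2}=\varphi$; then 3(a),(b) are literally 2(a),(b), and 3(c) follows since $c_{i,0}\equiv_{C\overline{c}_{<i}}c_{i,1}$ reduces any pair $\{\chi_{1}(x;c_{i,0}),\chi_{2}(x;c_{j,1})\}$ to the $2$-inconsistency of column $1$. The other two arrows, (1)$\Leftrightarrow$(2) and (3)$\Rightarrow$(1), are taken verbatim from the citation (Proposition 2.4 and Lemma 2.2 of Kaplan--Ramsey respectively). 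You instead cite only (1)$\Leftrightarrow$(2) and prove the rest. Your (1)$\Rightarrow$(3) via an s-indiscernible tree is correct --- with the caveat that the subtree swap below $0^{i}$ is not lex-preserving, so it is not an automorphism of the index structure; what you actually use is that, restricted to $\{0^{k+1},\,0^{k}\frown\langle 1\rangle : k<i\}\cup\{0^{i+1}\}$, it preserves quantifier-free tree types --- but it re-runs the modeling-property machinery hidden in the cited Proposition 2.4 to obtain what the doubling trick gives in two lines. Your (3)$\Rightarrow$(1) is exactly the cited Lemma 2.2, so proving it is optional; your twist, making the right child of $0^{n}$ equal to $\sigma_{n+1}^{-1}(c_{n+1,0})$, is the correct key idea, and the deferred globalization does go through: recursively let the tree anchored at row $n$ have root $c_{n,0}$, left subtree the tree anchored at row $n+1$, and right subtree the $\sigma_{n+1}^{-1}$-image of that same tree; branch consistency follows by composing the automorphisms, and cross-subtree inconsistency survives the twisting because every $\sigma_{k}$ with $k>j$ fixes $e_{j,1}$, which is precisely what the one-sided base in 3(a) provides. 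Only compactness, not the modeling property, is needed there; if you want your argument self-contained, that recursion is the real content and must be written out, whereas citing Lemma 2.2 (as the paper does) would let you shrink everything to the doubling trick.
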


\begin{rem}
The equivalence of (1) and (2) is \cite[Proposition 2.4]{kaplan2017kim} and the implication (3)$\implies$(1) is proved in \cite[Lemma 2.2]{kaplan2017kim}.  To complete the equivalence, we explain the direction (2)$\implies$(3).  Given a formula $\varphi(x;y)$ over $C$ and an array $(c_{i,j})_{i<\omega, j < 2}$ as in (2), we define a new array $(c'_{i,j})_{i < \omega, j < 2}$ by $c'_{i,j} = (c_{i,j},c_{i,j})$ and formulas $\chi_{1}(x;y) = \varphi(x;y)$ and $\chi_{2}(x;z) = \varphi(x;z)$.  The conditions 3(a) and 3(b) are immediate from 2(a) and 2(b) respectively.  Moreover, if $j \leq i$, then $c_{i,0} \equiv_{C \overline{c}_{<i}} c_{i,1}$ so $\{\chi_{1}(x;c_{i,0}),\chi_{2}(x;c_{j,1})\}$ is inconsistent because $\{\varphi(x;c_{k,1}) : k < \omega\}$ is 2-inconsistent, which shows 3(c).    
\end{rem}

\begin{rem}
Although conditions (1)-(3) are not, in general, equivalent at the level of formulas, if one of the conditions is true for a formula $\varphi(x;y)$ with $l(x) = n$, then for any of the other conditions, there is a formula $\varphi'(x;y')$ witnessing this\textemdash that is, each condition will be satisfied with a formula of the same number of object variables, although the parameter variables may differ.  Hence we say $T$ has SOP$_{1}$ witnessed by a formula in a single free variable if there is a $\varphi(x;y)$ with $l(x) = 1$ for at least one of the conditions (1)-(3).  
\end{rem}

\begin{rem}
The proof of (1) from (2) or (3) gives a formula $\varphi(x;y)$ that has SOP$_{1}$, possibly with parameters from $C$.  Note, however, that this implies there is formula without parameters witnessing SOP$_{1}$:  if $c$ enumerates these parameters and $\varphi(x;y,c)$ witnesses SOP$_{1}$ via the tree of tuples $(a_{\eta})_{\eta \in 2^{<\omega}}$, then $\varphi(x;y,z)$ witnesses SOP$_{1}$ via the tree of tuples $(b_{\eta})_{\eta \in 2^{<\omega}}$ defined by $b_{\eta} = (a_{\eta},c)$.  Conversely, if $\varphi(x;y)$ witnesses SOP$_{1}$, it will also witness SOP$_{1}$ in the theory with $C$ named as constants, which allows one to deduce (2) and (3) over the set $C$ from the statement of \cite[Proposition 2.4]{kaplan2017kim}.  
\end{rem}

\begin{lem}\label{moreindiscernible}
If $T$ has SOP$_{1}$, there is a formula $\varphi(x;y)$ and an array $(c_{i,0},c_{i,1})_{i < \omega}$ so that 
\begin{enumerate}
\item $\{\varphi(x;c_{i,0}) : i < \omega\}$ is consistent.
\item $\{\varphi(x;c_{i,1}) :i < \omega\}$ is 2-inconsistent.
\item $(\overline{c}_{i})_{i < \omega}$ is an indiscernible sequence.
\item $c_{i,0} \equiv_{\overline{c}_{<i}} c_{i,1}$ for all $i < \omega$.
\item $(c_{k,0})_{k \geq i}$ is $\overline{c}_{<i}c_{i,1}$-indiscernible.
\end{enumerate}
\end{lem}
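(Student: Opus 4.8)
The plan is to start from the array formulation of SOP$_{1}$ and to upgrade it by two successive indiscernible extractions, the second of which is the delicate one. First, applying Fact \ref{arrayequivalent}, fix a formula $\varphi(x;y)$ (over some parameter set $C$, which I name as constants and suppress, using the final Remark) together with an array $(c_{i,j})_{i<\omega,\,j<2}$ witnessing condition (2): so $c_{i,0}\equiv_{\overline{c}_{<i}}c_{i,1}$ for all $i$, the set $\{\varphi(x;c_{i,0}):i<\omega\}$ is consistent, and $\{\varphi(x;c_{i,1}):i<\omega\}$ is $2$-inconsistent. These are exactly the desired conditions (4), (1) and (2) for the not-yet-extracted array.

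Next I would extract an indiscernible sequence. By the usual Ramsey-plus-compactness argument there is an indiscernible sequence $(\overline{c}_{i})_{i<\omega}$ each of whose finite increasing subtuples realizes the Ehrenfeucht--Mostowski type of an increasing subtuple of the original array. Then (3) holds by construction, and (1), (2), (4) transfer: consistency of $\{\varphi(x;c_{i,0})\}$ passes to the new array because each of its finite subsets is already consistent in the EM type, whence the whole set is consistent by compactness; $2$-inconsistency of $\{\varphi(x;c_{i,1})\}$ is a property of pairs and is recorded in the EM type; and (4), namely $c_{i,0}\equiv_{\overline{c}_{<i}}c_{i,1}$, is a first-order property of the increasing tuple $\overline{c}_{\le i}$ that holds of every increasing tuple of the original, hence of the extracted one. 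At this point conditions (1)--(4) are secured.

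The remaining and genuinely harder step is (5). I would first observe that (5) is \emph{not} simply the automatic fact that tails of an indiscernible sequence are indiscernible over their heads: indeed $(c_{k,0})_{k\ge i}$ is already $\overline{c}_{<i}$-indiscernible by plain indiscernibility of $(\overline{c}_{i})_{i<\omega}$, so the real content of (5) is that the \emph{extra} parameter $c_{i,1}$ can be adjoined to the base $\overline{c}_{<i}$ without disturbing indiscernibility of the future $0$-parts. To obtain this I would carry out a second, iterated extraction working from left to right: once $\overline{c}_{\le i}$ has been pinned down, the base $\overline{c}_{<i}c_{i,1}$ is a fixed finite set, so one may re-extract the tail $(c_{k,0})_{k\ge i}$ to be indiscernible over it, keeping it based on the current sequence and fixing the already-secured initial segment. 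Here property (4) is what makes the absorption possible: since $c_{i,1}\equiv_{\overline{c}_{<i}}c_{i,0}$ and $c_{i,0}$ is the first term of the $\overline{c}_{<i}$-indiscernible sequence $(c_{k,0})_{k\ge i}$, the parameter $c_{i,1}$ may be chosen to realize the appropriate limit type over this tail. Finally I would pass to the limit by compactness, writing down the partial type asserting (1)--(5) simultaneously and realizing each finite fragment from the coherently extracted sequences.

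I expect the \textbf{main obstacle} to be precisely the coordination in this last step: arranging the re-extraction at level $i$ so that it preserves indiscernibility of the whole sequence together with the tail-indiscernibility conditions already arranged at levels below $i$, rather than destroying them. The way I would control this is to base every extraction on the previous sequence, so that all earlier conditions, being recorded in EM types over bases contained in the fixed set $\overline{c}_{<i}$, are inherited, and to keep the secured finite prefix fixed throughout; in this way the family of instances of (5) is built up monotonically and survives into the compactness limit.
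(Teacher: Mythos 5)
Your reduction to conditions (1)--(4) matches the paper's first step and is fine: Fact \ref{arrayequivalent}(2) plus one Ramsey/compactness extraction yields an array satisfying (1)--(4), since each of these conditions is recorded in the EM-type of the sequence of pairs. The gap is in your treatment of (5), and it is a missing idea, not bookkeeping. Your first device---pin down $\overline{c}_{\le i}$ and re-extract the tail $(c_{k,0})_{k\ge i}$ over $\overline{c}_{<i}c_{i,1}$---is already internally inconsistent ($c_{i,0}$ is simultaneously fixed and re-extracted), and no repair keeps the original pairing: condition (5) at level $i$ requires in particular that $c_{i,0}\equiv_{\overline{c}_{<i}c_{i,1}}c_{l,0}$ for the later $0$-parts in the sequence, i.e.\ that $c_{i,0}$ is not entangled with its own partner $c_{i,1}$ in any way not shared by later rows. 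That is a property of the array you already have; new elements extracted beyond index $i$ only realize types of old elements over $\overline{c}_{\le i}$, so they cannot create it, and nothing in (1)--(4) provides it (think of a definable relation linking each $c_{i,0}$ to its partner $c_{i,1}$ but not $c_{l,0}$ to $c_{i,1}$; this is compatible with (1)--(4)). Your second device---re-choosing $c_{i,1}$ to realize a limit type of the tail---provably destroys condition (2): if $c_{i,1}$ realizes the limit type of the sequence $(c_{k,0})_{k\ge i}$, then that sequence followed by $c_{i,1}$ is indiscernible, so $\exists x\,(\varphi(x;c_{k,0})\wedge\varphi(x;c_{i,1}))$ holds because $\exists x\,(\varphi(x;c_{k,0})\wedge\varphi(x;c_{l,0}))$ does; carrying this out at every level makes the new $1$-parts pairwise $\varphi$-consistent, so $2$-inconsistency---the entire SOP$_1$ content of the array---is lost.

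The idea you are missing, which is the actual content of the paper's proof, is to \emph{break} the original pairing rather than preserve it. Having extracted an indiscernible array $(b_{i,0},b_{i,1})_{i<\omega}$ satisfying (1)--(4), re-pair with an index shift: $c_{i,0}:=b_{2i+1,0}$ and $c_{i,1}:=b_{2i,1}$. Now the $1$-part attached to level $i$ comes from a row strictly before every row supplying the tail $(c_{k,0})_{k\ge i}$, so $\overline{c}_{<i}c_{i,1}\subseteq\overline{b}_{<2i+1}$ and (5) is simply the fact that the tail $(b_{k,0})_{k\ge 2i+1}$ of an indiscernible sequence is indiscernible over the initial segment $\overline{b}_{<2i+1}$; no iteration, second extraction, or limit is needed. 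The price is that (4) must be re-proved for the new pairing, and this is where pair-indiscernibility (3) earns its keep: $b_{2i+1,0}\equiv_{\overline{b}_{<2i}}b_{2i+1,1}$ by the old (4), and $b_{2i+1,1}\equiv_{\overline{b}_{<2i}}b_{2i,1}$ by (3), whence $c_{i,0}\equiv_{\overline{c}_{<i}}c_{i,1}$.
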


\begin{proof}
If $T$ has SOP$_{1}$, then by Fact \ref{arrayequivalent}(2), Ramsey, and compactness, there is a formula $\varphi(x;y)$ and an array $(b_{i,0},b_{i,1})_{i < \omega}$ satisfying (1)-(4) in the statement.  Define $c_{i,0} = b_{2i+1,0}$ and $c_{i,1} = b_{2i,1}$ for all $i < \omega$.  The array $(c_{i,0},c_{i,1})_{i < \omega}$ clearly satisfies (1)-(3).  (5) is also clear, since $(b_{k,0})_{k \geq 2i+1}$ is $\overline{b}_{<2i+1}$-indiscernible.  To see (4), note $b_{2i+1,0} \equiv_{\overline{b}_{<2i}} b_{2i+1,1}$ and hence $b_{2i+1,1} \equiv_{\overline{b}_{<2i}} b_{2i,1}$ by (3) so, by definition, $c_{i,0} \equiv_{\overline{c}_{<i}} c_{i,1}$.  
\end{proof}

\begin{lem}\label{mainstep}
Suppose $T$ does not witness SOP$_{1}$ with any formula in the variables $x$.  Suppose $b$ is a tuple of the same length as $x$, $C$ is some set of parameters, and $(c_{i,0},c_{i,1})_{i < \omega}$ is an array satisfying
\begin{enumerate}
\item $(\overline{c}_{i})_{i < \omega}$ is a $C$-indiscernible sequence.
\item $(c_{i,0})_{i < \omega}$ is $Cb$-indiscernible.
\item $c_{i,0} \equiv_{C\overline{c}_{<i}} c_{i,1}$ for all $i < \omega$.
\item $(c_{k,0})_{k \geq i}$ is $C\overline{c}_{<i}c_{i,1}$-indiscernible.
\end{enumerate}
Then there is $b' \equiv_{C(c_{i,0})_{i < \omega}} b$ such that $c_{i,0} \equiv_{Cb'} c_{i,1}$ for all $i < \omega$.  
\end{lem}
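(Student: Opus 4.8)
The plan is to argue by contradiction, so suppose no such $b'$ exists. Writing $d := (c_{i,0})_{i<\omega}$ for the whole first row, consider the partial type
\[
\Gamma(x) \;=\; \mathrm{tp}(b/Cd)\;\cup\;\{\,\theta(c_{i,0};x)\leftrightarrow\theta(c_{i,1};x) : i<\omega,\ \theta\in L(C)\,\},
\]
whose realizations are exactly the tuples $b'\equiv_{Cd}b$ with $c_{i,0}\equiv_{Cb'}c_{i,1}$ for all $i$. If $\Gamma(x)$ is consistent we are done, so I will assume it is inconsistent and convert this into a configuration witnessing SOP$_1$ \emph{in the variables $x$} via Fact~\ref{arrayequivalent}(3), contradicting the hypothesis.

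First I would extract a finite inconsistency by compactness. Since each conjunct $\theta(c_{i,0};x)$ belongs to $\mathrm{tp}(b/Cd)$, replacing $\theta$ by its negation where needed I may assume $\models\theta(c_{i,0};b)$ for every constraint, so that over $\mathrm{tp}(b/Cd)$ the biconditionals reduce to one-sided demands $\theta(c_{i,1};x)$. Inconsistency then yields a formula $\chi(x;d_{0})\in\mathrm{tp}(b/Cd)$ over $C$ (with $d_{0}$ a finite first-row tuple), finitely many $C$-formulas $\theta_{t}(z_t;x)$, and indices $j_{t}$ with
\[
\chi(x;d_{0})\wedge\bigwedge_{t}\theta_{t}(c_{j_{t},1};x)\quad\text{inconsistent,}
\]
while $\models\chi(b;d_{0})$ and $\models\theta_{t}(c_{j_{t},0};b)$ for all $t$. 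This already has the asymmetric ``cross'' shape of Fact~\ref{arrayequivalent}(3): first-row data feed a formula satisfied by $b$, while flipping the relevant coordinates to the second row produces inconsistency.

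To fit Fact~\ref{arrayequivalent}(3) exactly, I would coarsen the array into super-stages (blocks of enough consecutive original stages to contain all indices above) and set $\chi_{1}:=\chi$ and $\chi_{2}:=\bigwedge_{t}\theta_{t}$, letting $d_{i,0}$ collect the first-row material of the $i$-th block and $e_{i,1}$ the corresponding second-row material, reading off the splitting $c_{i,j}=(d_{ij},e_{ij})$ accordingly. Then condition~(2), that $(c_{i,0})_{i}$ is $Cb$-indiscernible, gives consistency of $\{\psi(x;c_{i,0}):i<\omega\}$ in Fact~\ref{arrayequivalent}(3)(b), realized by $b$ itself, since $b$ satisfies $\chi$ and each $\theta_t$ on the first row uniformly in $i$. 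Condition~(3) gives (a), because the base $Cc_{<i,0}e_{<i,1}$ is contained in $C\overline{c}_{<i}$, over which $c_{i,0}\equiv_{C\overline{c}_{<i}}c_{i,1}$.

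The crux, and the step I expect to be the main obstacle, is establishing the cross-inconsistency Fact~\ref{arrayequivalent}(3)(c) uniformly for all $j\leq i$: I have only a single inconsistent instance at fixed indices and must propagate it to every pair of super-stages with $j\leq i$. Here the two indiscernibility hypotheses do the work. Condition~(1) lets me slide the configuration along the $C$-indiscernible sequence of pairs to normalize the block pattern, while condition~(4), that $(c_{k,0})_{k\geq j}$ is $C\overline{c}_{<j}c_{j,1}$-indiscernible, is precisely what controls the interaction between a flipped second-row element $c_{j,1}$ and all later first-row blocks $d_{i,0}$ ($i\geq j$), so the inconsistency of $\chi_{1}(x;d_{i,0})\wedge\chi_{2}(x;e_{j,1})$ at one such $i$ transfers to all of them. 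Once (a)--(c) hold, Fact~\ref{arrayequivalent}(3)$\Rightarrow$(1) furnishes a formula in the variables $x$ with SOP$_1$, contradicting the hypothesis and forcing $\Gamma(x)$ to be consistent after all.
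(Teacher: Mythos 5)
Your overall strategy---argue by contradiction, extract a finite inconsistency by compactness, use the fact that $\mathrm{tp}(b/Cd)$ decides all first-row instances to turn the biconditionals into one-sided demands $\theta(c_{i,1};x)$, and then aim at Fact~\ref{arrayequivalent}(3) using hypotheses (1)--(4)---is exactly the paper's plan, and your verification of (3)(b) via hypothesis (2) is correct. But the proposal omits the one device that makes the two hard conditions checkable: the paper does not work with an arbitrary finite inconsistent fragment. It chooses $N$ \emph{maximal} such that $\mathrm{tp}(b/C(c_{i,0})_{i<\omega})$ together with all biconditionals at levels $<N$ is consistent. This localizes the failure at a \emph{single} level $N$: the constraints $\Delta$ at levels $<N$, being consistent with the type, are frozen into the parameter set (together with $c_{<N,0}$ and $\overline{c}_{<N}$), each block of the new array contains exactly one second-row element $c_{(M-N)i+N,1}$, and that element sits at the \emph{bottom} of the block's moving first-row tuple. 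Your fragment instead involves several second-row elements $c_{j_1,1},\dots,c_{j_k,1}$ at different levels, possibly interleaved with the first-row levels occurring in $\chi$, and this breaks both (3)(a) and (3)(c).

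Concretely, for (a): you claim $e_{i,0}\equiv_{Cc_{<i,0}e_{<i,1}}e_{i,1}$ follows from hypothesis (3), but your $e_{i,0}$ and $e_{i,1}$ are \emph{tuples} $(c_{l_1,0},\dots,c_{l_k,0})$ and $(c_{l_1,1},\dots,c_{l_k,1})$, while hypothesis (3) gives equivalence only one element at a time. The tuple version is false precisely in the situation where the lemma gets applied: in the array produced by Lemma~\ref{moreindiscernible} the first row is consistent and the second row is $2$-inconsistent for the SOP$_1$ formula $\varphi$, so already $\exists x(\varphi(x;z_1)\wedge\varphi(x;z_2))$ separates $(c_{l_1,0},c_{l_2,0})$ from $(c_{l_1,1},c_{l_2,1})$. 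For (c): the only tool for moving first-row material while holding second-row material fixed is hypothesis (4), whose base at level $l$ is $C\overline{c}_{<l}c_{l,1}$; it moves first-row elements lying \emph{above} $l$ only. If any first-row level of your block lies below or between the levels $j_t$, the diagonal inconsistency at block $j$ cannot be slid to the pair $(i,j)$ with $j<i$, because those first-row elements cannot be displaced while fixing $c_{j_k,1}$. Both obstructions vanish exactly when $k=1$ and the unique second-row level is the least level of the moving block, and arranging that---via the maximal $N$, the freezing of everything below $N$, and the normalization ``WLOG $\chi\vdash\varphi(x;c_{N,0})$''---is the actual content of the paper's proof. As written, your argument stalls at the step you yourself flag as the crux, and the step you present as routine ((3)(a)) is false.
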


\begin{proof}
Suppose not.  Let $N$ be maximal so that 
$$
\text{tp}(b/C(c_{i,0})_{i < \omega}) \cup \{\varphi(x;c_{i,0}) \leftrightarrow \varphi(x;c_{i,1}) : i < N, \varphi \in L(C)\}
$$
is consistent.  By compactness, we may fix $\chi(x;c_{ \leq M,0}) \in \text{tp}(b/C(c_{i,0})_{i < \omega})$, a finite $\Delta(x) \subseteq \{\varphi(x;c_{i,0}) \leftrightarrow \varphi(x;c_{i,1}) : i < N, \varphi \in L(C)\}$, and a formula $\varphi \in L(C)$ so that 
$$
\chi(x;c_{\leq M,0}) \wedge \bigwedge \Delta(x) \vdash \varphi(x;c_{N,0}) \leftrightarrow \neg \varphi(x;c_{N,1}).
$$
We may take $M \geq N$, and, without loss of generality, $\chi(x;c_{\leq M,0}) \vdash \varphi(x;c_{N,0})$.  Put $C' = C \cup \overline{c}_{<N}$ and let 
\begin{eqnarray*}
a_{i.0} &=& (c_{(M-N)i + N,0}, c_{(M-N)i + N + 1,0}, \ldots, c_{(M-N)i+M,0}) \\
b_{i,0} &=& c_{(M-N)i+N,0} \\
b_{i,1} &=& c_{(M-N)i+N,1}.
\end{eqnarray*}
Unravelling definitions, we have $b_{i,0} \equiv_{C'a_{<i,0}\overline{b}_{<i}} b_{i,1}$ for all $i$.  Therefore, we may choose, for all $i < \omega$, some $a_{i,1}$ so that $a_{i,0}b_{i,0} \equiv_{C' a_{<i,0} \overline{b}_{<0}} a_{i,1}b_{i,1}$.  Set $z = (z_{0},\ldots, z_{M-N})$ and define $\psi(x;z) \in L(C')$ by
$$
\psi(x;z) = \chi(x;c_{< N,0}, z_{0},\ldots, z_{M-N}) \wedge \bigwedge \Delta(x).
$$

Since $\chi(x;c_{<N,0},c_{N,0},\ldots, c_{M,0}) \in \text{tp}(b/C(c_{i,0})_{i < \omega})$ and $(c_{i,0})_{i < \omega}$ is $Cb$-indiscernible, we have 
$$
\{\chi(x;c_{<N,0},c_{(M-N)i +N,0}, \ldots, c_{(M-N)i + M,0}) : i < \omega\} \subseteq \text{tp}(b/C(c_{i,0})_{i < \omega}).  
$$
By construction, $\Delta(x) \cup \text{tp}(b/C(c_{i,0})_{i < \omega})$ is consistent.  Unravelling definitions, we have $\{\psi(x;a_{i,0})\wedge \varphi(x;a_{i,0}) : i < \omega\}$ is consistent.  By our choice of $N$, we know 
$$
\chi(x;c_{<N,0},c_{N,0},\ldots, c_{M,0}) \wedge \bigwedge \Delta(x) \wedge \varphi(x;c_{N,0}) \wedge \varphi(x;c_{N,1})
$$
is inconsistent.  By $C$-indiscernibility of the sequence $(\overline{c}_{i})_{i < \omega}$, for all $i < \omega$, 
$$
\chi(x;c_{<N,0},c_{(M-N)i+N,0},\ldots, c_{(N-M)i+M,0}) \wedge \bigwedge \Delta(x) \wedge \varphi(x;c_{(M-N)i+N,0}) \wedge \varphi(x;c_{(M-N)i+N,1})
$$
is inconsistent.  Then as the sequence $(c_{k,0})_{k \geq (M-N)i+N}$ is $C\overline{c}_{<(M-N)i+N}c_{(M-N)i+N,1}$-indiscernible, it follows that, for all $j \geq i$, 
$$
\chi(x;c_{<N,0},c_{(M-N)j+N,0},\ldots, c_{(N-M)j+M,0}) \wedge \bigwedge \Delta(x) \wedge \varphi(x;c_{(M-N)j+N,0}) \wedge \varphi(x;c_{(M-N)i+N,1})
$$
is inconsistent.  Unravelling definitions again, this shows that for $i \leq j$, 
$$
\psi(x;a_{j,0}) \wedge \varphi(x;b_{j,0}) \wedge \varphi(x;b_{i,1})
$$ 
is inconsistent.  The formulas $\psi(x;z) \wedge \varphi(x;y)$ and $\varphi(x;y)$ witness the condition for SOP$_{1}$ in Fact \ref{arrayequivalent}(3) with respect to the array $((a_{i,j},b_{i,j}),b_{i,j})_{i < \omega, j < 2}$.  This shows $T$ has SOP$_{1}$ (in some formula in the variables $x$).  
\end{proof}

\begin{thm}
If $T$ has SOP$_{1}$, there is some formula in a single free-variable with SOP$_{1}$.    
\end{thm}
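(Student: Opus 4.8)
The plan is to reduce the number of object variables one at a time, using Lemma \ref{mainstep} as the engine, and to organize the reduction around a minimal counterexample. Suppose $T$ has SOP$_1$ but that no formula in a single free variable does, and let $n$ be least such that SOP$_1$ is witnessed by some $\varphi(x;y)$ with $l(x)=n$; then $n\geq 2$, and by minimality $T$ has no SOP$_1$ in fewer than $n$ variables. I will produce from $\varphi$ a formula in one variable with SOP$_1$, contradicting the hypothesis.

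For the setup I would apply Lemma \ref{moreindiscernible} to get $\varphi(x;y)$ and an array $(c_{i,0},c_{i,1})_{i<\omega}$ satisfying (1)--(5), fix $b\models\{\varphi(x;c_{i,0}):i<\omega\}$, and then, by Ramsey and compactness, pass to a configuration in which the whole sequence of pairs $(\overline{c}_i)_{i<\omega}$ is $b$-indiscernible. This extraction is harmless: consistency, $2$-inconsistency, and the conditions (4) and (5) are all features of the Ehrenfeucht--Mostowski type of $(\overline{c}_i)_{i<\omega}$, and $\varphi(b;c_{i,0})$ lies in the type over $b$, so all of (1)--(5) persist with $b$ still witnessing (1). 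Writing $x=(x',x_{n-1})$ and $b=(b',b_{n-1})$ with $l(x')=n-1$, the $b$-indiscernibility of $(c_{i,0})_{i<\omega}$ gives $b'$-indiscernibility, so the array satisfies all four hypotheses of Lemma \ref{mainstep} relative to the tuple $b'$ and $C=\emptyset$; and the standing hypothesis of that lemma, that $T$ has no SOP$_1$ in the variables $x'$, holds by minimality. Lemma \ref{mainstep} therefore yields $\hat{b}\equiv_{(c_{i,0})_{i<\omega}} b'$ with $c_{i,0}\equiv_{\hat{b}} c_{i,1}$ for every $i$.

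Next I would transport along an automorphism fixing $(c_{i,0})_{i<\omega}$ and sending $b'$ to $\hat{b}$; the image $a$ of $b_{n-1}$ then satisfies $\varphi(\hat{b},a;c_{i,0})$ for all $i$. The candidate one-variable witness is $\varphi'(x_{n-1};y):=\varphi(\hat{b},x_{n-1};y)$, tested against the same array $(c_{i,0},c_{i,1})$ over the parameter set $C=\hat{b}$ via Fact \ref{arrayequivalent}(2) (or, more flexibly, (3)). Two of the three clauses are immediate: $\{\varphi'(x_{n-1};c_{i,0})\}$ is consistent, witnessed by $a$, and $\{\varphi'(x_{n-1};c_{i,1})\}$ is $2$-inconsistent because a common solution, paired with $\hat{b}$, would solve two instances of the $2$-inconsistent family $\{\varphi(x;c_{i,1})\}$.

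The hard part will be the coherence clause $c_{i,0}\equiv_{\hat{b}\,\overline{c}_{<i}} c_{i,1}$. Lemma \ref{mainstep} supplies equivalence over $\hat{b}$ alone, and I additionally have property (4), $c_{i,0}\equiv_{\overline{c}_{<i}} c_{i,1}$, which is equivalence over $\overline{c}_{<i}$ without $\hat{b}$; but the two bases do not combine for free. Indeed, small structured examples show that $\hat{b}$-indiscernibility of the pairs together with these two one-sided equivalences does not by itself force equivalence over $\hat{b}\,\overline{c}_{<i}$, so the refined indiscernibility of Lemma \ref{moreindiscernible} must enter essentially here. The route I would pursue is to exploit that mainstep's equivalence holds simultaneously for all $i$, in tandem with property (5) (the tail $(c_{k,0})_{k\geq i}$ stays indiscernible over $\overline{c}_{<i}c_{i,1}$), so as to migrate the equivalence across the enlarged base; and, should a direct application of Fact \ref{arrayequivalent}(2) resist, to fall back on the added latitude of Fact \ref{arrayequivalent}(3), which demands coherence of only part of each parameter, relegating whatever part of $c_{i,1}$ that $\hat{b}$ uses to separate it from $c_{i,0}$ over the past to the $d$-coordinate that clause (3)(a) ignores, while the surviving $e$-coordinate still carries the mixed inconsistency of (3)(c). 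Establishing this coherence is, I expect, the entire combinatorial content of the theorem beyond Lemma \ref{mainstep}; once it is in hand, Fact \ref{arrayequivalent} delivers a one-variable formula with SOP$_1$, against the minimality of $n$.
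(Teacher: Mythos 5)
Your setup (minimal counterexample, Lemma \ref{moreindiscernible}, making the pair-sequence $b$-indiscernible, splitting $b=(b',b_{n-1})$, and invoking Lemma \ref{mainstep} for the tuple $b'$) matches the paper's strategy, and you have correctly located the crux: Lemma \ref{mainstep} applied with $C=\emptyset$ only yields $c_{i,0}\equiv_{\hat{b}}c_{i,1}$, whereas Fact \ref{arrayequivalent}(2) demands $c_{i,0}\equiv_{\hat{b}\,\overline{c}_{<i}}c_{i,1}$. But identifying the gap is where your proposal stops: the ``migration'' of the equivalence to the enlarged base is exactly the content you have not supplied, and neither of your suggested fallbacks closes it. A single application of Lemma \ref{mainstep} cannot be repaired after the fact, because the lemma controls only $\mathrm{tp}(\hat{b}/(c_{i,0})_{i<\omega})$ together with parameter-free equivalences; the relation of $\hat{b}$ to the tuples $c_{k,1}$, $k<i$, is left completely unconstrained, so nothing forces formulas with parameters from $\overline{c}_{<i}$ to take the same truth value at $c_{i,0}$ and $c_{i,1}$. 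The latitude in Fact \ref{arrayequivalent}(3) does not help either: with the array and formula you propose, clause (3)(a) still requires equivalence over a base containing the earlier columns from the $1$-side, which is the same difficulty.

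The paper's resolution is not a post-hoc argument but a different architecture: Lemma \ref{mainstep} is applied \emph{iteratively, over growing bases}. At stage $n$ one works over $C_n=\overline{c}_{<n}$ with the tail array $(\overline{c}_k)_{k\geq n}$ (its hypotheses hold by (4) and (5) of Lemma \ref{moreindiscernible}), producing $b_{n+1}$ with the same type as $b_n$ over $C_n(c_{k,0})_{k\geq n}$ that moreover satisfies $\psi(y;c_{m,0})\leftrightarrow\psi(y;c_{m,1})$ for all $\psi\in L(C_n)$ and all $m\geq n$; one then re-extracts so that $(\overline{c}_k)_{k\geq n+1}$ is $C_{n+1}b_{n+1}$-indiscernible, which is precisely what licenses the next application of the lemma. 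Preservation of the type over $C_n(c_{k,0})_{k\geq n}$ guarantees that the consistency of $\{\varphi(x,b_{n+1};c_{i,0}):i<\omega\}$ and all previously achieved equivalences survive each stage, and a final compactness argument yields a single $b$ realizing every $q_{n,m}$, i.e.\ $c_{n,0}\equiv_{b\overline{c}_{<n}}c_{n,1}$ for all $n$ simultaneously \textemdash{} the coherence you needed. So the engine you chose is right, but the theorem cannot be obtained by one invocation of it; the induction-with-increasing-bases plus compactness is the missing (and essential) idea.
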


\begin{proof}
Suppose $\varphi(x,y;z)$ is a formula witnessing SOP$_{1}$.  So there is an array $(c_{i,0},c_{i,1})_{i < \omega}$ satisfying conditions (1)-(5) of Lemma \ref{moreindiscernible} with respect to $\varphi(x,y;z)$.  We will suppose $T$ does not witness SOP$_{1}$ in the free variables $y$ and we will exhibit a formula witnessing SOP$_{1}$ in the free variables $x$.  Let $C_{n}$ be the set enumerated by $\overline{c}_{<n}$.  For $n \leq m < \omega$, define the partial type $q_{n,m}(y)$ by 
$$
q_{n,m}(y) = \{\psi(y;c_{m,0}) \leftrightarrow \psi(y;c_{m,1}): \psi(y;z) \in L(C_{n})\}.
$$
By induction on $n < \omega$, we will choose $b_{n}$ so that 
\begin{enumerate}
\item $\{\varphi(x,b_{n};c_{i,0}) : i < \omega\}$ is consistent.
\item $b_{n}$ realizes $q_{k,m}$ for all $k < n$ and $m \geq k$.
\item $(\overline{c}_{k})_{k \geq n}$ is $C_{n}b_{n}$-indiscernible.  
\end{enumerate}
To begin, choose an arbitrary $(a_{0},b_{0}) \models \{\varphi(x,y;c_{i,0}) : i < \omega\}$.  By Ramsey, compactness, and automorphism, we may assume $(\overline{c}_{i})_{i < \omega}$ is $b_{0}$-indiscernible.  Next, suppose we are given $b_{n}$ so that $\{\varphi(x,b_{n};c_{i,0}) : i < \omega\}$ is consistent, $b_{n}$ realizes $q_{k,m}$ for all $k < n$ and $m \geq k$, and $(\overline{c}_{k})_{k \geq n}$ is $b_{n}C_{n}$-indiscernible.  We additionally know $c_{k,0} \equiv_{C_{n} \overline{c}_{<k}} c_{k,1}$ and $(c_{l,0})_{l \geq k}$ is $C_{n} \overline{c}_{<k} c_{k,1}$-indiscernible for all $k \geq n$.  Therefore, we may apply Lemma \ref{mainstep} to conclude 
$$
\text{tp}(b_{n}/C_{n}(c_{i,0})_{i < \omega}) \cup \{\psi(y;c_{m,0}) \leftrightarrow \psi(y;c_{m,1}) : m \geq n, \psi \in L(C_{n})\}
$$
is consistent.  Let $b_{n+1}$ realize this partial type.  By Ramsey, compactness, and an automorphism over $C_{n}$, we may assume $(\overline{c}_{k})_{k \geq n}$ is $C_{n}b_{n+1}$-indiscernible, hence $(\overline{c}_{k})_{k \geq n+1}$ is $C_{n+1}b_{n+1}$-indiscernible.  It follows, then, that $\{\varphi(x,b_{n+1};c_{i,0}) : i < \omega\}$ is consistent and $b_{n+1}$ realizes $q_{k,m}(y)$ for all $k < n+1$ and $m \geq k$.  

By compactness, then, we obtain a tuple $b$ so that $\{\varphi(x,b;c_{i,0}) : i < \omega\}$ is consistent and $b$ realizes $q_{n,k}(y)$ for all $n \leq k < \omega$.  It follows that $c_{n,0} \equiv_{\overline{c}_{<n}b} c_{n,1}$ for all $n$.  Setting $d_{i,j} = (b,c_{i,j})$ for all $i < \omega, j < 2$, we obtain an array $(d_{i,0},d_{i,1})_{i < \omega}$ so that $\{\varphi(x;d_{i,0}) : i < \omega\}$ is consistent, $\{\varphi(x;d_{i,1}) : i < \omega\}$ is inconsistent, and $d_{i,0} \equiv_{\overline{d}_{<i}} d_{i,1}$ for all $i$.  This shows there is a formula in the variables $x$ witnessing SOP$_{1}$.  This shows we may reduce the number of variables in the formula witnessing SOP$_{1}$ and, by induction, we conclude.  
\end{proof}

\subsection*{Acknowledgements}

This work constitutes part of our dissertation under the supervision of Thomas Scanlon, whom we would like to thank.  

\bibliographystyle{alpha}
\bibliography{ms.bib}{}

\end{document}